\newtheorem{thm}{Theorem}
\newtheorem{lemma}[thm]{Lemma}
\newtheorem{rem}[thm]{Remark}
\newtheorem{example}[thm]{Example}
\DeclareMathOperator{\Vol}{Vol}
\newcommand{\mc}{\mathcal}
\newcommand{\del}{\partial}
\newcommand{\InPr}[2]{\langle #1, #2 \rangle}
\newcommand{\bnabla}{\overline{\nabla}}
\newcommand{\rest}[1]{\bigl|\bigr._{#1}}
\newcommand{\R}{\mathbb{R}}
\newcommand{\N}{\mathbb{N}}
\newcommand{\D}{\mathbb{D}}
\newcommand{\TM}{\tilde{M}}
\begin{document}
\title[Hypersurfaces with small Steklov eigenvalues]{Hypersurfaces of Euclidean space with prescribed boundary and small Steklov eigenvalues}
\date{}
\begin{abstract}
Given a smooth compact hypersurface $M$ with boundary $\Sigma=\partial M$, we prove the existence of a sequence $M_j$ of hypersurfaces with the same boundary as $M$, such that each Steklov eigenvalue $\sigma_k(M_j)$ tends to zero as $j$ tends to infinity. The hypersurfaces $M_j$ are obtained from $M$ by a local perturbation near a point of its boundary. Their volumes and diameters are arbitrarily close to those of $M$, while the principal curvatures of the boundary remain unchanged. 
\end{abstract}

\author{Bruno Colbois}
\address{Universit\'e de Neuch\^atel, Institut de Math\'ematiques, Rue
	Emile-Argand 11, CH-2000 Neuch\^atel, Switzerland}
\email{bruno.colbois@unine.ch}
\author{Alexandre Girouard}
\address{D\'e\-par\-te\-ment de math\'ematiques et de
	sta\-tistique, Univer\-sit\'e Laval, Pavillon Alexandre\-Vachon,
	1045, av. de la M\'edecine,
	Qu\'ebec Qc G1V 0A6, 
	Canada }
\email{alexandre.girouard@mat.ulaval.ca}

\author{Antoine Métras}
\address{D\'epartement de math\'ematiques et de statistique, Universit\'e de Montr\'eal, CP 6128 succ Centre-Ville, Montr\'eal, QC H3C 3J7, Canada}
\email{metrasa@dms.umontreal.ca}

\maketitle

\section{Introduction}
Let $M$ be a $n$-dimensional smooth compact Riemannian manifold with boundary $\Sigma=\partial M$.  The Steklov
eigenvalue problem on $M$ consists in finding all numbers $\sigma\in\R$ for which there exists a nonzero function $u \in C^\infty(M)$ which solves
\begin{equation*}
    \begin{cases}
        \Delta u = 0 & \text{in } M, \\
        \del_\nu u = \sigma u & \text{on } \Sigma.
    \end{cases}
\end{equation*}
Here $\Delta$ is the Laplacian induced from the Riemannian metric $g$ on $M$ and $\partial_\nu$
is the outward pointing normal derivative along the boundary $\Sigma$. The Steklov eigenvalues 
form an unbounded increasing sequence
$0 = \sigma_0 \leq \sigma_1 \leq \sigma_2 \leq \dots \to \infty$, each of which is repeated according to its multiplicity. 
See \cite{GP17} and \cite{legacy} for background on this problem.

One of our main interests in recent years has been to understand the particular role that the boundary  $\Sigma$ plays with respect to Steklov eigenvalues. Some papers studying this question are \cite{CGH18, PS16,CESG17,CG18,WX09,Kar15,GPPS,CGG17,shamma}. 
In particular, we have considered the effect of various geometric constraints on individual eigenvalues $\sigma_k$. One particularly interesting question is to prescribe a Riemannian metric $g_\Sigma$ on the boundary $\Sigma$ and to investigate lower and upper bounds for the eigenvalue $\sigma_k$ among all Riemmanian metrics $g$ which coincide with $g_\Sigma$ on the boundary. Given any Riemannian metric $g$ on $M$ such that $g=g_\Sigma$ on $\Sigma$, it is proved in \cite{CESG17} that one can make any eigenvalue $\sigma_k$ arbitrarily small by modifying the Riemannian metric $g$ in an arbitrarily small neighborhood $V\subset M$ of a point $p\in\partial M$. More precisely: for each $\epsilon>0$ and each $k\in\N$, there exists a Riemannian metric $\tilde{g}=\tilde{g}_{\epsilon,k}$ on $M$ which coincide with $g_\Sigma$ on $\Sigma$ and also with $g$ outside the neighborhood $V$, such that $\sigma_k(M,\tilde{g})<\epsilon$. For manifolds $M$ of dimension $n\geq 3$ one can also obtain arbitrarily large eigenvalues, but in general not using a perturbation that is localized near the boundary of $M$. See \cite{CESG17,CG18}. In \cite{CGG17} a more restrictive constraint was imposed by requiring the manifold $M$ to be a submanifold of $\R^m$ with prescribed boundary $\Sigma=\partial M\subset\R^m$. In this context an upper bound for $\sigma_k$ was given, in terms of $\Sigma$ and of the volume of $M$. The authors were unable at the time to give a lower bound and they raised the question to know if there exists one, or if instead arbitrarily small eigenvalues are possible. The goal of this paper is to answer this question.

\begin{thm}\label{thm:main}
Let $M \subset \R^{n+1}$ be a smooth $n$-dimensional compact hypersurface with nonempty boundary $\Sigma=\partial M$. For each $p \in \Sigma$, there exists a sequence of hypersurfaces $M_j\subset\R^{n+1}$, $j\in\N$, with boundary $\partial M_j=\Sigma$ such that,
\begin{gather}\label{limit:mainthm}
\lim_{j\to\infty}\sigma_k(M_j)=0\qquad\mbox{ for each }k\in\N.
\end{gather}
The hypersurface $M_j$ coincide with $M$ outside of a ball $B(p,\frac{1}{j})$ and the principal curvatures of $\Sigma\subset M_j$ are independent of $j$. Moreover, 
the volume and diameter of $M_j$ converge to those of $M$ as $j\to\infty$. 
\end{thm}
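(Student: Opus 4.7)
The plan is to establish $\sigma_k(M_j) \leq \epsilon_j$ with $\epsilon_j \to 0$ by exhibiting, for each $j$ and $k$, a $(k+1)$-dimensional subspace $V_j \subset H^1(M_j)$ on which the trace to $L^2(\Sigma)$ is injective and whose Rayleigh quotients $R(u) = \int_{M_j}|\nabla u|^2\,dV / \int_\Sigma u^2\,dA$ are uniformly small. The theorem will then follow from the standard min-max characterization of $\sigma_k$.

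For the construction of $M_j$, I would partition a portion of $\Sigma \cap B(p,1/j)$ into $k+1$ disjoint sub-arcs $A_{j,0},\ldots,A_{j,k}$, each of $(n-1)$-volume comparable to $(1/j)^{n-1}$. I would then modify $M$ inside $B(p,1/j)$ so that $M_j$ contains $k+1$ ``petals'' $L_{j,0},\ldots,L_{j,k}$, where each petal $L_{j,i}$ meets $\Sigma$ precisely in $A_{j,i}$ and is joined to the rest of $M_j$ only through thin cylindrical necks of radius $r_j \to 0$ placed in the interior of $M_j$, away from $\Sigma$. The modification would agree with $M$ outside $B(p,1/j)$ and, by being flat to second order along $\Sigma$, would preserve the principal curvatures of $\Sigma \subset M_j$. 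The volume and diameter constraints are then automatic, as the modification is contained in $B(p,1/j)$ and its $n$-volume can be made $o(1)$ as $j \to \infty$.

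For the test functions, I would take $\chi_{j,i}$ to be a smoothed indicator of the petal $L_{j,i}$: equal to $1$ on $L_{j,i}$, zero on the other petals, with the transition concentrated on the connecting necks, and extend $\chi_{j,0}$ by $1$ on $M \setminus B(p,1/j)$. The traces $\chi_{j,i}|_\Sigma$ then approximate the indicators of the $A_{j,i}$, which are linearly independent in $L^2(\Sigma)$, so $V_j = \operatorname{span}(\chi_{j,0},\ldots,\chi_{j,k})$ has injective trace. A standard computation on a thin cylindrical neck of radius $r_j$ and length $\ell_j$ gives $\int_{M_j}|\nabla \chi_{j,i}|^2 = O(r_j^{n-1}/\ell_j)$, while $\int_\Sigma \chi_{j,i}^2 \approx |A_{j,i}| \sim (1/j)^{n-1}$. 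Choosing $r_j$ so that $j^{n-1}r_j^{n-1}/\ell_j \to 0$ (for instance $r_j \ll j^{-n/(n-1)}$ with $\ell_j$ comparable to $1/j$) drives the Rayleigh quotients on $V_j$ to zero, which yields the bound $\sigma_k(M_j) \to 0$.

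The main obstacle is the explicit geometric realization of the petals-and-necks configuration as a smooth embedded hypersurface in $\R^{n+1}$ meeting all the constraints simultaneously: coincidence with $M$ outside $B(p,1/j)$; boundary equal to $\Sigma$ with the same principal curvatures as $\Sigma \subset M$; and the topological structure in which each petal touches $\Sigma$ only along its designated sub-arc while the petals remain connected within $M_j$ solely through the thin interior necks. This is delicate because the collar of $\Sigma$ in $M_j$ must match that of $M$ to second order, which restricts how the petals may attach to $\Sigma$ at different sub-arcs while being only weakly connected away from $\Sigma$. I would carry this out by first setting up a half-space model where $p$ is the origin and $\Sigma$ lies in a coordinate hyperplane, constructing the petals and necks there as explicit perturbations of the flat model, and then transporting the construction back to $M$ via a diffeomorphism of $\R^{n+1}$ supported in $B(p,1/j)$.
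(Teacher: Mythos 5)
Your construction, as stated, cannot exist, and this is where the proposal breaks down. Since $M_j$ is a hypersurface with boundary exactly $\Sigma$, every point of $\Sigma$ has a half-ball neighbourhood in $M_j$, so a collar neighbourhood of the connected set $\Sigma\cap B(p,\frac1j)$ inside $M_j$ is connected. Two ``petals'' attached along adjacent sub-arcs $A_{j,i}$, $A_{j,i'}$ (or a petal and whatever piece of $M_j$ carries the gap between them) are therefore joined through this collar, at points arbitrarily close to $\Sigma$ near the interfaces $\partial A_{j,i}$ --- not ``only through thin cylindrical necks placed in the interior, away from $\Sigma$''. Consequently your energy estimate $\int_{M_j}|\nabla\chi_{j,i}|^2=O(r_j^{n-1}/\ell_j)$, which charges only the necks, omits the unavoidable transition of $\chi_{j,i}$ from $1$ to $0$ inside the boundary collar. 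Making that collar contribution small compared with the trace norm $\sim j^{-(n-1)}$ is exactly the hard part: one would have to pinch or fold the embedded hypersurface so that the petals become essentially separate already at depth $o(1/j^2)$ from the fixed boundary, while keeping $\partial M_j=\Sigma$, embeddedness in $\R^{n+1}$, and the principal curvatures of $\Sigma$. You acknowledge this as ``the main obstacle'' but do not carry it out, so the proof is not complete; and the heuristic behind it is the very one the paper warns against in its introduction: interior dumbbell/neck constructions do not produce small Steklov eigenvalues, and thin-channel constructions that do are based on deforming the boundary, which is forbidden here.

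For contrast, the paper's mechanism is entirely different and avoids any decoupling of the surface. It leaves $M$ untouched on a thin strip along $\Sigma$ inside the perturbed ball (so the boundary principal curvatures are trivially unchanged), passes to a quasi-isometric flat chart, and there replaces the surface by the graph $S_f$ of a rapidly oscillating, compactly supported function $f$ whose gradient is (almost everywhere) parallel to the gradient of an explicit test function $u$; by the graph formula
\begin{equation*}
\int_{S_f}|\nabla u_f|^2\,dV=\int_U\frac{|\nabla u|^2+|\nabla u|^2|\nabla f|^2-\InPr{\nabla u}{\nabla f}^2}{\sqrt{1+|\nabla f|^2}}\,dx,
\end{equation*}
parallelism kills the numerator's growth while $|\nabla f|\to\infty$ inflates the denominator, so the Rayleigh quotient tends to zero; repeating this at $k+1$ nearby boundary points gives $k+1$ disjointly supported test functions and hence $\sigma_k(M_j)\to 0$. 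If you want to pursue your route, you would need to supply the missing embedded ``pinched collar'' construction and redo the energy estimate including the collar transition; as written, the proposal has a genuine gap.
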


In order for \eqref{limit:mainthm} to hold for each $k\in\N$, it is necessary that the perturbed hypersurfaces $M_j$ differ from $M$ arbitrarily close to the boundary $\Sigma$ as $j\to\infty$. Indeed, let $b$ be the number of connected components of $\Sigma$ and note that any hypersurface $\tilde{M}$ which coincide with $M$ in a neighborhood $\Omega$ of $\Sigma$ satisfy
$\sigma_{b+1}\geq C>0$ where $C$ is given by a sloshing problem on $\Omega\cap M$. 
See \cite{CGG17} for details.

\begin{rem}
 Theorem 1 holds in arbitrary positive codimension. We decided to state it for hypersurfaces for the sake of notational simplicity.
\end{rem}

\begin{rem}
	Because each manifold $M_j$ coincide with $M$ in a neighborhood $\Omega_j$ of its prescribed boundary $\Sigma$, the Dirichlet-to-Neumann map $D_j:C^\infty(\Sigma)\rightarrow C^\infty(\Sigma)$ associated to $M_j$ all have the same full symbol \cite[section 1]{LeeUhlmann}. See also \cite{PoltSher}.
\end{rem}

\subsection{The strategy of the proof}
For eigenvalues of the Laplace operator, it is well known that one can obtain arbitrarily small eigenvalues by constructing thin Cheeger dumbbells in the interior of the manifold. See \cite{Chavel,CoursColboisMtl}. This strategy does not work for Steklov eigenvalues. For Steklov eigenvalues, it is possible to obtain arbitrarily small eigenvalues by creating thin channels, but this involves deformation of the boundary. See \cite{GP10}. In order to prove Theorem \ref{thm:main}, we have to use a more elaborate strategy.

Given a smooth function $\tilde{u}:\R^{n+1}\rightarrow\R$, consider the restriction $u=\tilde{u}\rest{M}$.
It is well known that if $\int_{\Sigma}u\,dA=0$,
\begin{gather}\label{ineq:variationalintro}
\sigma_1(M)\int_{\Sigma}u^2\,dA\leq\int_M|\nabla u|^2\,dV.
\end{gather}
See \cite{GP17} and Section \ref{section:prelim} below. Here $\nabla u$ is the tangential gradient of $u$. It is the projection of the ambient gradient $\bnabla\tilde{u}$ on the tangent spaces of $M\subset\R^{n+1}$. The basic idea of our proof is to fix a function $\tilde{u}\in C^\infty(\R^{n+1})$ and consider the vector field $\bnabla\tilde{u}$ in the ambient space $\R^{n+1}$. The hypersurface $M$ is then deformed by creating ``wrinkles"  that tend to make the various tangent spaces $T_qM$, for $q\in\mbox{int }M$, perpendicular to $\bnabla\tilde{u}(p)$. This is achieved by ``folding the surface like an accordion" in the direction perpendicular to $\bnabla\tilde{u}$. In the limit the right-hand-side of inequality \eqref{ineq:variationalintro} tends to zero. 
Let us illustrate this strategy with a simple example.
\begin{example}
Given a smooth function $f:\overline{\D}\rightarrow\R$ vanishing on the circle $S^1=\partial\D$, consider the surface
$$S_f:=\mbox{Graph of }f=\{(x,y,f(x,y))\,:\,(x,y)\in\overline{\D}\}.$$
The boundary of $S_f$ is the same for each $f$.
We will use the function defined by $\tilde{u}(x,y,z)=x$ and its restriction $u=\tilde{u}\rest{S_f}$ as a trial function in inequality \eqref{ineq:variationalintro}. Because $\nabla\tilde{u}=(1,0,0)$, it follows from Lemma \ref{lemma:dirichletgraph} that the Dirichlet energy of $u:=\rest{S_f}\tilde{u}:S_f\rightarrow\R$ is given by
$$\int_{S_f}|\nabla u|^2=\int_{\D} \frac{1 + f_y^2}{\sqrt{1 + f_x^2+f_y^2}}
\,dxdy.$$
For $n\in\N$, define $f=f_n:\overline{\D}\rightarrow\R$ by
$$f(x,y)=\sin(nx)(\overbrace{1-x^2-y^2}^{\phi(x,y)}).$$
It follows from
\begin{gather*}
f_x^2=n^2\bigl(\cos(nx)\phi+\frac{1}{n}\sin(nx)\phi_x\bigr)^2\quad\mbox{ and }\quad
f_y^2=\sin^2(nx)\phi_y^2
\end{gather*}
that
$$\lim_{n\to\infty}\int_{S_{f_n}}|\nabla u|^2=0.$$
Together with \eqref{ineq:variationalintro}, this shows that
$\lim_{n\to\infty}\sigma_1(S_{f_n})=0.$
\end{example}

The proof of Theorem \ref{thm:main} is based on the above idea, but it is technically more involved because we want to localize this argument to a small neighbourhood of a point $p$ of the boundary. This is a significative gain compared to the above example because it allows the construction of an arbitrary finite number of disjointly supported test functions with small Dirichlet energy, leading to the collapse of each eigenvalue $\sigma_k$ rather than just $\sigma_1$. For the sake of readability and simplicity, the deformation that we use in the proof of Theorem \ref{thm:main} are continuous but only piecewise smooth. This is not problematic because only first derivatives of these deformations appear.

\subsection*{Plan of the paper}
In Section \ref{section:prelim} we review the min-max characterization of Steklov eigenvalues and we prove a lemma regarding the control of the Dirichlet energy under quasi-isometries. We then proceed to construct the perturbed hypersurfaces in Section \ref{section:perturbation}. We use a quasi-isometric chart to an hypersurface with a flat boundary. The perturbed submanifold is then constructed by considering the graph
of a locally supported oscillating function. Finally, in Section \ref{section:testfunction} an appropriate test function is used to conclude the proof of Theorem \ref{thm:main}.

\section{Notations and preliminary considerations}\label{section:prelim}
Let $M$ be a smooth compact manifold with boundary $\Sigma$. The volume form on $M$ is written $dV$, while 
the volume form on $\Sigma$ is $dA$. We use the usual Sobolev space $H^1(M)\subset L^2(M,dV)$.
The Steklov eigenvalues $\sigma_k$ admits a variational characterization in terms of the \emph{Steklov-Rayleigh quotient} of a function $0\neq u \in H^1(M)$,
\begin{equation*}
    \mc{R}(u) = \frac{\int_{M} |\nabla u|^2\,dV}{\int_{\Sigma} u^2\,dA}.
\end{equation*}
The numerator $D(u)=\int_{M} |\nabla u|^2\,dV$ is the \emph{Dirichet energy} of $u\in H^1(M)$.
It is well known that
\begin{equation}\label{eq:minmaxsigmak}
    \sigma_k = \min_{\substack{S \subset H^1(M) \\ \dim S = k+1}}
    \max_{u \in S \setminus \{0\}} \mc{R}(u),
\end{equation}
where the minimum is taken over all $(k+1)$-dimensional linear subspaces $S\subset H^1(M)$.
\subsection{Quasi-isometries and Dirichlet energy}
Let $M$ and $\TM$ be two $n$-dimensional Riemannian manifolds with boundary. A diffeomorphism $\phi:M\rightarrow\TM$ is a \emph{quasi-isometry} with constant $C\geq 1$ if for each $p\in M$ and each $0\neq v\in T_pM$, 
$$\frac{1}{C}\leq\frac{\|D_p\phi(v)\|^2}{\|v\|^2}\leq C.$$
Quasi-isometries provide a control of the Dirichlet energy of a function.
\begin{lemma}\label{lemma:QIcontrolEnergy}
Let $\phi:M\rightarrow\TM$ be a quasi-isometry with constant $C\geq 1$. Let $f\in H^1(\TM)$ then
$$\frac{1}{C^{\frac{n}{2}+1}}\leq\frac{\|\nabla (f\circ\phi)\|_{L^2(M)}^2}{\|\nabla f\|_{L^2(\TM)}^2}\leq C^{\frac{n}{2}+1}.$$
\end{lemma}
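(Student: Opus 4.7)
The plan is to decouple the energy ratio into two pointwise estimates: one on the gradient norm and one on the volume element. First, I would use the chain rule $d(f\circ\phi)_p=df_{\phi(p)}\circ D_p\phi$ and dualize through the Riemannian musical isomorphism to obtain the pointwise bound
$$\|\nabla(f\circ\phi)(p)\|\leq\|D_p\phi\|_{\mathrm{op}}\,\|\nabla f(\phi(p))\|,$$
together with the reverse inequality using $(D_p\phi)^{-1}$. The quasi-isometry hypothesis is exactly the statement that $\|D_p\phi\|_{\mathrm{op}}$ and $\|(D_p\phi)^{-1}\|_{\mathrm{op}}$ are both at most $\sqrt{C}$, which yields the pointwise sandwich $C^{-1}\|\nabla f(\phi(p))\|^2\leq\|\nabla(f\circ\phi)(p)\|^2\leq C\|\nabla f(\phi(p))\|^2$.

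Second, since every singular value of $D_p\phi$ lies in $[C^{-1/2},C^{1/2}]$, the Jacobian determinant satisfies $C^{-n/2}\leq|\det D_p\phi|\leq C^{n/2}$. Integrating the pointwise bound over $M$ and performing the change of variables $q=\phi(p)$ converts $dV_M$ into $|\det D\phi|^{-1}\,dV_{\TM}$, and the two factors combine to give the desired constant $C^{n/2+1}$. Concretely, I would write
$$\|\nabla(f\circ\phi)\|_{L^2(M)}^2\leq C\int_M\|\nabla f(\phi(p))\|^2\,dV_M = C\int_{\TM}\|\nabla f\|^2\,|\det D\phi|^{-1}\,dV_{\TM}\leq C^{n/2+1}\|\nabla f\|_{L^2(\TM)}^2,$$
and the lower bound is obtained by running the same chain with each inequality reversed and $C$ replaced by $C^{-1}$.

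I do not anticipate a genuine obstacle here; every step is standard. The only mild subtlety worth flagging is the dualization from $D_p\phi$ acting on tangent vectors to its action on covectors: it is this step that forces the operator-norm bound on gradients to be $C$ rather than $\sqrt{C}$, and hence the final exponent to be $n/2+1$ rather than $n/2$.
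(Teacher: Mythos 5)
Your argument is correct and is essentially the paper's proof in different clothing: the paper compares $g$ with the pullback metric $\phi^*\tilde g$ (pointwise gradient comparison with factor $C$, volume-form comparison with factor $C^{n/2}$), which is exactly your chain-rule estimate combined with the Jacobian change of variables. One small correction to your closing remark: the dualization costs nothing extra --- the bound on gradient norms is still $\sqrt{C}$ since the adjoint of $D_p\phi$ has the same operator norm --- and the factor $C$ in your pointwise sandwich (hence the exponent $\frac{n}{2}+1$) comes simply from squaring, consistent with your displayed inequalities.
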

\begin{proof}
Let $\tilde{g}$ be the Riemannian metric of $\TM$ and let $g$ be that of $M$. Let $\hat{g}=\phi^\star(\tilde{g})$. Because $\phi$ is a quasi-isometry with constant $C$, the following holds for each $0\neq v\in TM$,
$$\frac{1}{C}\leq \frac{g(v,v)}{\hat{g}(v,v)}\leq C.$$
It follows that
$$\frac{1}{C}g(\nabla_{g}f,\nabla_{g}f)\leq\hat{g}(\nabla_{\hat{g}}f,\nabla_{\hat{g}}f)\leq Cg(\nabla_{g}f,\nabla_{g}f).$$
The corresponding volume forms satisfy
$$C^{-n/2}dV_g\leq dV_{\hat{g}}\leq C^{n/2}dV_g.$$
This leads to
\begin{align*}
\|\nabla (f\circ\phi)\|_{L^2}^2&=\int_M g\bigl(\nabla_g (f\circ\phi),\nabla_g(f\circ\phi)\bigr)\,dV_g\\
&\leq
C^{n/2+1}\int_M {\hat{g}}\bigl(\nabla_{\hat{g}} (f\circ\phi),\nabla_{\hat{g}} (f\circ\phi)\bigr)\,dV_{\hat{g}}\\
&=
C^{n/2+1}\int_{\TM} {\tilde{g}}(\nabla_{\tilde{g}} f,\nabla_{\tilde{g}} f)\,dV_{\tilde{g}}.
\end{align*}
The lower bound is identical.
\end{proof}
\subsection{Quasi-isometric charts}
Recall that a subset $M\subset\R^{n+1}$ is an hypersurface with boundary if for each $p\in M$, there exist open sets $W,W'\subset\R^{n+1}$ with $p\in W$ and a diffeomorphism $\psi:W\rightarrow W'$ such that
$\psi(M\cap W)$ is an open set in the half-space
$$H=\{x\in\R^{n+1}\,:\,x_{n+1}=0, x_1\geq 0\}.$$
The point $p\in M$ is on the boundary $\Sigma$ of $M$ if and only if $\psi$ sends it to the boundary of the half-space $H$: 
$$\psi(p)\in\partial H:=\{x\in H\,:\,x_1=0\}.$$
This definition is coherent: it does not depend on the choice of the diffeomorphism $\psi$.
See \cite{Hi76} for further details.
By further restricting $\psi$ and scaling if necessary, we can assume that it is a quasi-isometry and that its image $W'$ is a cylinder.
\begin{lemma}\label{lemma:flatchart}
	For each $p\in\Sigma$, there exists a quasi-isometry 
	$$\psi:W\longrightarrow W'=B_{\R^n}(0,1)\times (-1,1)$$
	with $\psi(p)=0$ and such that the image of $M\cap W$ is
	$$U:=\psi(M\cap W)=\{x\in H\,:\,|x|<1\}.$$
\end{lemma}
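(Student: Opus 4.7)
The plan is to build $\psi$ from a slice chart for $M \subset \R^{n+1}$ near $p$, by composing with a translation, a rescaling, and restricting the domain. As in the definition of hypersurface with boundary stated above, there exists a diffeomorphism $\psi_0 : W_0 \to W_0'$ between open subsets of $\R^{n+1}$ with $p \in W_0$ and $\psi_0(M \cap W_0) = W_0' \cap H$. Since $\psi_0(p) \in \partial H$, composing with the translation by $-\psi_0(p)$, which preserves both $H$ and $\partial H$, reduces to the case $\psi_0(p) = 0$.

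Next I would choose $r > 0$ small enough that $r \cdot \overline{W'} \subset W_0'$, where $W' := B_{\R^n}(0,1) \times (-1,1)$, and set $\psi_1 := r^{-1} \psi_0$, $W := \psi_1^{-1}(W')$, and $\psi := \psi_1|_W$. This yields a diffeomorphism $\psi : W \to W'$. The slice property of $\psi_0$ is invariant under scaling and restriction, so $\psi(M \cap W) = W' \cap H$. Since $x_{n+1} = 0$ on $H$, one has $W' \cap H = \{x \in H : x_1^2 + \cdots + x_n^2 < 1\} = \{x \in H : |x| < 1\} = U$, as desired.

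The quasi-isometry property will then follow from compactness. By the choice of $r$, the closure $\overline{W} = \psi_1^{-1}(\overline{W'})$ is compact and contained in $W_0$. The continuous maps $q \mapsto \|D_q\psi_1\|$ and $q \mapsto \|(D_q\psi_1)^{-1}\|$ are therefore bounded on $\overline{W}$; letting $C$ be the square of the larger bound gives $C^{-1} \|v\|^2 \leq \|D_q\psi(v)\|^2 \leq C \|v\|^2$ for every $q \in W$ and $v \in T_qW$. I do not foresee any serious obstacle: the argument is a routine chart-manipulation, and the only point requiring attention is to fix $r$ (and hence the compact closure $\overline{W} \subset W_0$) before extracting the quasi-isometry constant by compactness.
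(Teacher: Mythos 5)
Your argument is correct and is essentially the paper's own (the paper gives no detailed proof, only the remark that one restricts and rescales the slice chart, which is exactly your translation--scaling--restriction construction, with compactness of $\overline{W}\subset W_0$ supplying the quasi-isometry constant). One small point to tighten: since the stated definition only guarantees that $\psi_0(M\cap W_0)$ is \emph{some} open subset of $H$, you should choose $r$ small enough that $r\,(W'\cap H)$ is contained in that open set, not merely that $r\,\overline{W'}\subset W_0'$; with that adjustment the identity $\psi(M\cap W)=U$ holds as claimed.
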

\begin{rem}
	We identify $U\subset\R^{n+1}$ with a subset of $\R^n$ so that we can write $x=(x_1,\cdots,x_n)\in U$ instead of
	$x=(x_1,\cdots,x_n,0)\in U$.
\end{rem}
\begin{figure}[h]
	\centering
	\includegraphics[width=6cm]{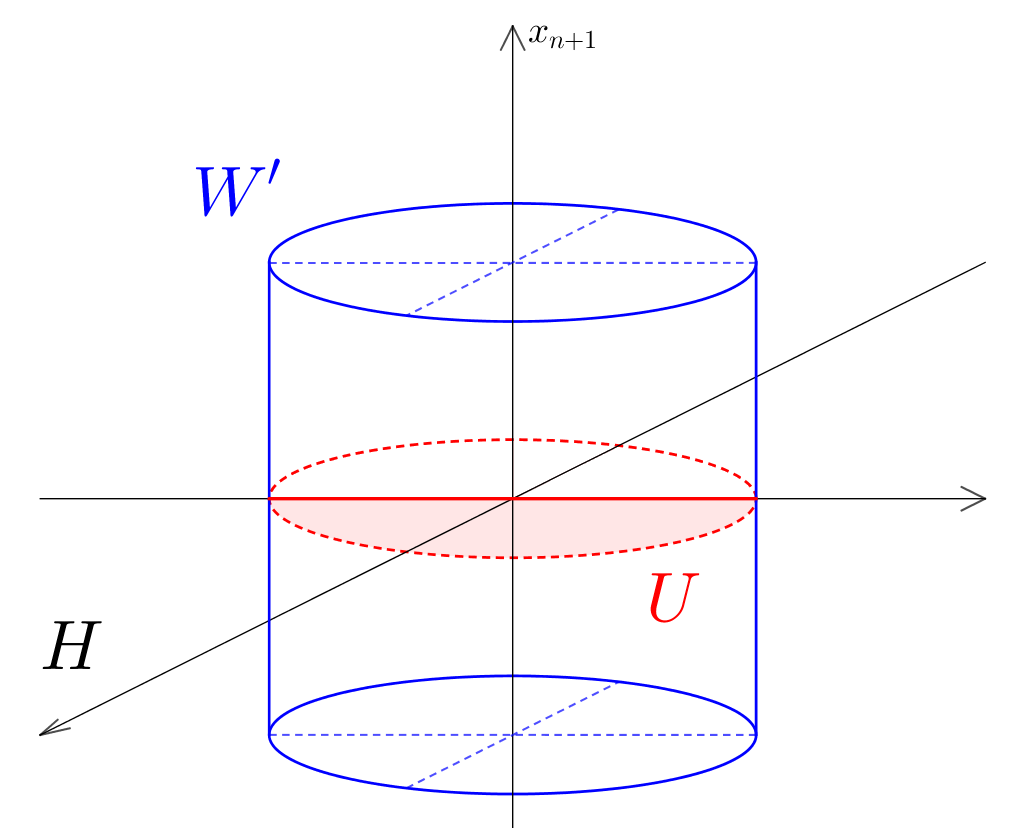}
	\caption{The domain $U$}
\end{figure}
In particular, the restriction of $\psi$ to $\Sigma\cap W$ is also a quasi-isometry.

\subsection{Dirichlet energy on the graph of a function}
Let $U\subset\R^n$ be a bounded open set and let $f:U\rightarrow\R$ be a bounded smooth function. Consider the graph
$$S_f=\{(x,f(x))\,:\,x\in U\}\subset\R^{n+1}.$$
Given a function $u:U\rightarrow\R$, define $\tilde{u}:U\times\R\rightarrow\R$ by $\tilde{u}(x,x_{n+1})=u(x)$ and define $u_f:S_f\rightarrow\R$ is defined by
$$u_f(x,f(x)) = u(x)=\tilde{u}\rest{S_f}.$$
\begin{lemma}\label{lemma:dirichletgraph}
	The Dirichlet energy of $u_f:S_f\rightarrow\R$ is
	\begin{equation*}
	\int_{S_f} |\nabla u_f|^2\,dV
	= \int_U \frac{|\nabla u|^2 + |\nabla u|^2 |\nabla f|^2
		- \InPr{\nabla u}{\nabla f}^2}{\sqrt{1 + |\nabla f|^2}}
	\,dx,
	\end{equation*}
	where on the left-hand-side $\nabla$, $dV$ and the norm are taken on $S_f$ and on the right-hand-side $dx=dx^1\cdots dx^n$ is the Lebesgue measure on $U$, while $\nabla$ is the usual gradient on $\R^n$.
\end{lemma}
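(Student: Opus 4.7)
The plan is to parametrise $S_f$ by the map $\Phi \colon U \to S_f$, $\Phi(x)=(x,f(x))$, and compute both the induced volume form and the tangential gradient $\nabla u_f$ in terms of data on $U$. These two ingredients combine to give the integrand on the right-hand side.

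First I would compute the pulled-back metric. Since $\partial_i \Phi = e_i + f_i\, e_{n+1}$, the induced metric is $g_{ij} = \delta_{ij} + f_i f_j$, and a standard determinant computation yields the volume form $dV = \sqrt{1+|\nabla f|^2}\,dx$. Rather than inverting $g$ directly (which could be done via the Sherman--Morrison formula), it is cleaner to exploit the fact that the tangential gradient on $S_f$ of the restriction of an ambient function $\tilde{u}$ is the orthogonal projection of $\bnabla \tilde{u}$ onto $T_qS_f$. For the specific ambient extension $\tilde{u}(x,x_{n+1})=u(x)$, one has $\bnabla\tilde{u}=(\nabla u, 0)$. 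The unit normal to $S_f$ at $(x,f(x))$ is
\[
N(x) = \frac{1}{\sqrt{1+|\nabla f|^2}}(-\nabla f(x),\,1),
\]
so the Pythagorean identity $|\nabla u_f|^2 = |\bnabla \tilde{u}|^2 - \InPr{\bnabla\tilde{u}}{N}^2$ gives
\[
|\nabla u_f|^2 = |\nabla u|^2 - \frac{\InPr{\nabla u}{\nabla f}^2}{1+|\nabla f|^2}.
\]

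Multiplying this expression by $\sqrt{1+|\nabla f|^2}\,dx$ and putting the result over a common denominator produces exactly
\[
\frac{|\nabla u|^2 + |\nabla u|^2|\nabla f|^2 - \InPr{\nabla u}{\nabla f}^2}{\sqrt{1+|\nabla f|^2}}\,dx,
\]
and integrating over $U$ via the change-of-variables formula for $\Phi$ yields the claimed identity. There is no genuine obstacle here; the only thing to be careful about is bookkeeping the factors of $\sqrt{1+|\nabla f|^2}$ between the projection formula and the area element, which is why combining them into a single fraction as above is the tidiest presentation.
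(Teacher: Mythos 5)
Your proposal is correct and follows essentially the same route as the paper: express $\nabla u_f$ as the orthogonal projection of $\bnabla\tilde{u}=(\nabla u,0)$ onto $T_qS_f$ via the explicit unit normal $N=\frac{1}{\sqrt{1+|\nabla f|^2}}(-\nabla f,1)$, and combine the resulting pointwise identity with the graph area element $dV=\sqrt{1+|\nabla f|^2}\,dx$. Your Pythagorean form $|\nabla u_f|^2=|\nabla u|^2-\frac{\InPr{\nabla u}{\nabla f}^2}{1+|\nabla f|^2}$ is algebraically identical to the paper's displayed expression, and your extra derivation of the volume form from $g_{ij}=\delta_{ij}+f_if_j$ is just a justification of a step the paper states without proof.
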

	The Cauchy-Schwarz inequality gives $|\nabla u|^2 |\nabla f|^2
- \InPr{\nabla u}{\nabla f}^2 \geq 0$ with equality if and only
if $\nabla u = c \nabla f$ for some constant $c$.
\begin{proof}[Proof of Lemma \ref{lemma:dirichletgraph}]
	To simplify notations, we will write $S=S_f$. 
	For any point $p\in S$, the gradient $\nabla u_f\in T_pS$ is the projection of $\bnabla\tilde{u}$ on $T_pS$.
	That is,
	\begin{equation*}
	\nabla u_f = \bnabla \tilde{u} - \InPr{\bnabla \tilde{u}}{N}N,
	\end{equation*}
	where $N$ is a unit normal vector to $T_pS$.
	It follows from
	\begin{equation*}
	\nabla\tilde{u} = \left(\frac{\del u}{\del x_1}, \dots, \frac{\del u}
	{\del x_n}, 0\right)
	\end{equation*}
	and
	\begin{equation}
	N =\frac{1}{\sqrt{1 + |\nabla f|^2}}\left(e_{n+1}-\sum_{i=1}^n\frac{\partial f}{\partial x_i}e_i\right).
	\end{equation}
	that
	\begin{gather}\label{identity:nablauf}
	|\nabla u_f|^2 
	= \frac{|\nabla u|^2 + |\nabla u|^2|\nabla f|^2
		- \InPr{\nabla u}{\nabla f}^2}{1 + |\nabla f|^2}.
	\end{gather}
	
	The volume element on $S_f$ is given by

	\begin{equation}
	dV 
	= \sqrt{1 + |\nabla f|^2}
	\end{equation}
	Together with identity \eqref{identity:nablauf} this completes the proof.
\end{proof}

\section{Perturbation of the submanifold $M$}\label{section:perturbation}
Given $p\in\Sigma$, let $\psi$ be the quasi-isometric chart provided by Lemma \ref{lemma:flatchart}. In order to prove Theorem \ref{thm:main}, we will deform the submanifold $M$ in the neighborhood $W$ of the point $p$ by deforming the neighborhood $U\subset W'$ inside $W'$ and pulling back to $W$ using the quasi-isometry $\psi$.
Consider a smooth function $f:U\rightarrow\R$ which is supported in the interior of $U$ and which satisfy $|f(x)|<1$ for each $x\in U$. This last condition implies that the graph of $f$,
$$S_f = \{(x, f(x))\,:\,x\in U\}$$
is contained in the cylinder $W'$. 
Hence it can be used to define a deformation of $M$ as follows:
\begin{gather}\label{def:TM}
\TM_f:=(M\setminus W)\cup\psi^{-1}(S_f).
\end{gather}
Because $f$ is smooth, supported in $U$ and $S_f\subset W'$, the subset $\TM_f\subset\R^{n+1}$ also is a submanifold with boundary $\partial\TM_f=\Sigma=\partial M$.

\subsection{Deformation function}

We now construct specific functions $f$ and $u$ such that the Dirichlet energy of $u_f$ is small, basing our method on the idea that if $\nabla u$
and $\nabla f$ are parallel the numerator is independent of $\nabla f$,
while the denominator behaves as $|\nabla f|$. Thus we want $f$ and $u$
to have parallel gradients with $|\nabla f|$ big to get a small Dirichlet
energy for $u_f$.

Consider numbers
$\epsilon, \delta_1, \delta_2, \rho > 0$ that are sufficiently small and define
$\delta:=\delta_1 + \delta_2$. These constants will be adjusted later in equation \eqref{dfn:constants}.
Let $q = (\delta, 0, \dots, 0)\in H$. Consider the following subsets of $U$:
\begin{align*}
  A &:= \{(x_1, \dots, x_n) \,|\, x_1 \geq \delta, \|x - q\| \leq \epsilon\} \\
  B &:= \{(x_1, \dots, x_n) \,|\, \delta_1 \leq x_1 \leq \delta,
      \|\pi x\| \leq \epsilon\} \\
  C &:= \{(x_1, \dots, x_n) \,|\, 0 \leq x_1 \leq \delta_1,
      \|\pi x\| \leq \epsilon\} \\
  D &:= \{x \in U \setminus (A \cup B \cup C) \,|\, d(x,A \cup B \cup C)
      \leq \rho\}
\end{align*}
where $\pi(x_1, x_2, \dots, x_n) = (x_2, \dots, x_n)$ is the projection
on the boundary. Let $\Omega = A \cup B \cup C$. See Figure \ref{fig:domain}, where the $x_1$-axis is vertical.
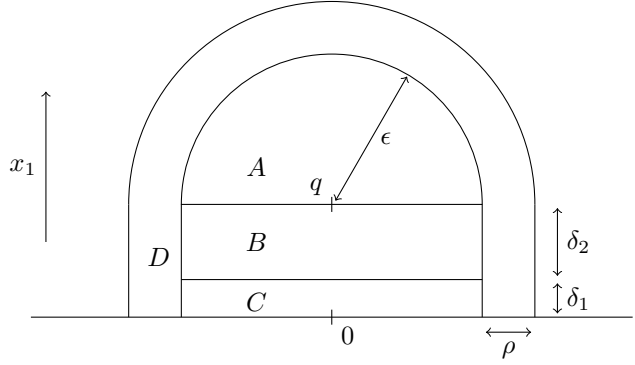
\begin{figure}[h]
  \centering
  \begin{tikzpicture}
    \draw [->] (-3.8, 1) -- node [left]{$x_1$} (-3.8,3);

    \coordinate [label=below right:$0$] (o) at (0,0);
    \draw (0,0.1) -- (0, -0.1);
    \draw (-4,0) -- (4,0);

    \coordinate [label=above left:$q$] (p) at (0, 1.5);
    \draw (0,1.6) -- (0,1.4);
    \draw (2,1.5) arc [start angle=0, end angle=180, radius=2];
    \draw [<->] (0.05, 1.55) -- node [right]{$\epsilon$} (1, 3.2);
    \node at (-1,2) [] {$A$};

    \draw (-2,0) -- (-2,1.5) -- (2,1.5) -- (2,0);
    \draw (-2,0.5) -- (2,0.5);
    \node at (-1,1) [] {$B$};
    \node at (-1, 0.2) [] {$C$};
    \draw [<->] (3,0.05) -- node [right]{$\delta_1$}(3,0.45);
    \draw [<->] (3,0.55) -- node [right]{$\delta_2$}(3,1.45);

    \draw (2.7,0) -- (2.7,1.5);
    \draw (2.7,1.5) arc [start angle=0, end angle=180, radius=2.7];
    \draw (-2.7,1.5) -- (-2.7,0);
    \node at (-2.3,0.8) [] {$D$};
    \draw [<->] (2.05,-0.2) -- node [below]{$\rho$} (2.65,-0.2);

  \end{tikzpicture}
  \caption{Perturbation region}
  \label{fig:domain}
\end{figure}

Define cutoff functions
\begin{align*}
  \eta &: [0, \infty) \to \mathbb{R} & \gamma &: [0, \delta]
                                              \to \mathbb{R} \\
  \eta(x) &= \max\{0, 1- x\} &
  \gamma(x) &= \begin{cases}
    0 & \text{if } x \leq \delta_1 \\
    x - \delta_1 & \text{if } \delta_1 \leq x \leq \delta
  \end{cases}.
\end{align*}
Finally, define $F : [0, \infty) \to \mathbb{R}$ to be periodic
of period 4, given on the interval $[0,4]$ by
\begin{align*}
  F(x) = \begin{cases}
    1 - x & \text{if } x \in [0,2],\\
    x - 3 & \text{if } x \in [2,4].
  \end{cases}
\end{align*}
Given $\omega>0$, define $f : U \rightarrow \mathbb{R}$ by
\begin{equation}
  f(x) = \begin{cases}
    \eta\left(\frac{d(x,\Omega)}{\rho}\right)
    \gamma(x_1) F(\omega \|\pi x\|) & \text{if } x_1 \leq \delta, \\
    \eta\left(\frac{d(x, \Omega)}{\rho}\right)
    \delta_2 F(\omega \|x - q \|) & \text{if } x_1 \geq \delta.
  \end{cases}
\end{equation}
Note that functions $\eta$ and $\gamma$ are used to localize the deformation function $f$. The parameter $\omega$ will be sent to $\infty$ later in the proof. 
It is important to remark that $|F'(x)| = 1$ at points where
$F$ is differentiable and $|F(x)| \leq 1$ for all $x$.
\begin{rem}\label{rem:smoothing}
The deformation function $f$ are continuous and piecewise smooth. Because only first order derivatives of these functions appear in the estimates below, one could replace them by smooth approximations without affecting the results.
\end{rem}

\section{Test function}\label{section:testfunction}
The test function $u$ is supported on $\Omega\subset U$
and is defined by
\begin{equation}
  u(x) = \begin{cases}
    1 - \frac{\|\pi x\|}{\epsilon} \quad \text{if } x \in B \cup C \\
    1 - \frac{\|x - q\|}{\epsilon} \quad \text{if } x \in A 
  \end{cases}
\end{equation}

On $A$, the Dirichlet energy of $u_f$ can be made small by taking
$\omega$ big. Indeed for almost all $x \in A$,
\begin{align*}
  \nabla f &= \pm \delta_2 \omega \frac{x - q}{\|x - q\|} \\
  \nabla u &= - \frac{1}{\epsilon}\frac{x - q}{\|x - q\|}
\end{align*}
and using the fact that $\nabla f$ and $\nabla u$ are parallel,
the Dirichlet energy is
\begin{align*}
  \int_{S_f \cap A \times \mathbb{R}} |\nabla u_f|^2 dV
  &= \int_A \frac{|\nabla u|^2}{\sqrt{1 + |\nabla f|^2}}\,dx
  = \frac{1}{\epsilon^2} \frac{1}{\sqrt{1 + \delta_2^2 \omega^2}} \Vol A
  = \frac{c_1 \epsilon^{n-2}}{\sqrt{1 + \delta_2^2 \omega^2}}
\end{align*}
where $c_1$ is some constant.

On $B$ and $C$, $\nabla f$ and $\nabla u$ are not parallel but
it is possible to make the Dirichlet energy small by making
the volume of $B$ and $C$ small:
for $B$, we have for almost all $x \in B$:
\begin{align*}
  \nabla f(x) &= F(\omega \|\pi x\|) e_1 \pm \gamma(x_1) \omega
                \frac{\pi x}{\|\pi x\|} \\
  \nabla u(x) &= -\frac{1}{\epsilon} \frac{\pi x}{\|\pi x\|}
\end{align*}
and since $e_1$ and $\pi x$ are orthogonal,
\begin{equation*}
  |\nabla f(x)|^2 = F(\omega \|\pi x\|)^2 + \gamma(x_1)^2 \omega^2.
\end{equation*}
Then the Dirichlet energy on $B$ is
\begin{align*}
  \int_{S_f \cap B \times \mathbb{R}} |\nabla u_f|^2 dV
  &= \int_B \frac{\frac{1}{\epsilon^2} + \frac{1}{\epsilon^2}
    F(\omega \|\pi x\|)^2}
    {\sqrt{1 + F(\omega \|\pi x\|)^2 + \gamma(x_1)^2 \omega^2}}
    dx\\
  &\leq \frac{1}{\epsilon^2}
    \int_B \frac{2}{\sqrt{1 + \gamma(x_1)^2 \omega^2}} dx \\
  &= c_2 \epsilon^{n-3}
    \int_0^{\delta_2} \frac{1}{\sqrt{1 + x_1^2 \omega^2}} dx_1 \\
  &= c_2 \epsilon^{n-3}
    \frac{\ln(\delta_2 \omega + \sqrt{1 + \delta_2^2})}{\omega} \\
\end{align*}
And on $C$, since $f=0$, the Dirichlet energy is simply
\begin{equation*}
  \int_{S_f \cap C \times \mathbb{R}} |\nabla u_f|^2 dV = \frac{1}{\epsilon^2} \Vol C
  = c_3 \delta_1 \epsilon^{n-3}
\end{equation*}
In total, the Rayleigh quotient of $u_f$ is bounded as follows
\begin{align*}
  \mc{R}(u_f)
  &\leq \frac{1}{\epsilon^{n-1}}
    \left(c_1 \frac{\epsilon^{n-2}}{\sqrt{1 + \delta_2^2 \omega^2}}
    + c_2 \frac{\epsilon^{n-3} \ln(\delta_2 \omega +
    \sqrt{1 + \delta_2^2 \omega^2})}{\omega}
    + c_3 \delta_1 \epsilon^{n-3} \right)\\
  &= c_1 \frac{\epsilon^{-1}}{\sqrt{1 + \delta_2^2 \omega^2}}
    + c_2 \frac{\epsilon^{-2} \ln(\delta_2 \omega + \sqrt{1 +
    \delta_2^2 \omega^2})}{\omega}
    + c_3 \delta_1 \epsilon^{-2}.
\end{align*}
We are now ready to define the constants more precisely. By using the following:
\begin{gather}\label{dfn:constants}
\delta_1 = \epsilon^3,\, \delta_2 = \epsilon^{3/2},\, \omega = \epsilon^{-3}
\end{gather}
we obtain
\begin{equation*}
  \mc{R}(u_f) = \mc{O}(\epsilon^{1/2}) \quad \text{as } \epsilon \to 0
\end{equation*}
We have proved that a local perturbation of $U$ allows the construction
of a local trial function with arbitrarily small Steklov-Rayleigh quotient. The proof of our main result is now an easy consequence.
\begin{proof}[Proof of Theorem \ref{thm:main}]
  Let $k,j \in \mathbb{N}$ wit $j$ sufficiently large. Let $p_1, \dots, p_{k+1} \in B(p, \frac{1}{j}) \cap \Sigma$
  and let $\psi$ be the quasi-isometric chart from Lemma \ref{lemma:flatchart}. 
  For each $p_i$, we follow the above construction to obtain a deformation function $f_i$ and a test function $u_i$
  that is supported in a small enough neighbourhood of $p_i$ so that all the $u_i$'s have disjoint supports and
  are supported in $\psi(B(p,\frac{1}{j}) \cap M)$. The parameter $\epsilon$ in the previous construction small enough
  to guarantee that the Rayleigh quotient of each $u_i$ is smaller than $\frac{1}{j}$. 
  Consider the deformation function $f = f_1 + \dots + f_{k+1}$ supported in $B(p, \frac{1}{j})$ and the perturbed manifold $M_j = \TM_f$.
  Taking the pullback by $\psi$, we obtain $k+1$ test functions $\psi^*(u_i)$ with disjoint supports and
  from Lemma \ref{lemma:QIcontrolEnergy}, their
  Rayleigh quotient is less than $\frac{c}{j}$ where $c$ is a constant depending on $\psi$.
  By the variational characterization \eqref{eq:minmaxsigmak} of the eigenvalue $\sigma_k$, we conclude that $\sigma_k(M_j) \leq \frac{c}{j}$.

  It remains to prove that the perturbed manifolds $M_j$ satisfy the geometric conditions from the theorem.
  It is enough to show it for a perturbation around a single point $f = f_1$ and that the conditions
  are true when $\epsilon \to 0$.
 Let $x \in \Omega$. There exists $x' \in \Omega$ such that
$f(x') = 0$ and the distance in $M_f$ between $x$ and $x'$ is
$\mc{O}(\epsilon^{3/2})$.
There is a path from $x'$
to some point $y$ on $\del M$ such that the length of the path is less
than $\delta + \epsilon\pi/2 $, it suffices to
take the shortest path in $\{x \in M \,|\, f(x) = 0\}$ from
$x'$ to $\del M$ (see figure \ref{fig:path}).
This total length of the path from $x$ to $y$ goes to 0 when
$\epsilon$ goes to $0$ and since $\del \TM_f = \del M$ this implies
that the diameter of $\TM_f$ converges to $\mbox{Diam}(M)$ when $\epsilon$ goes to 0.

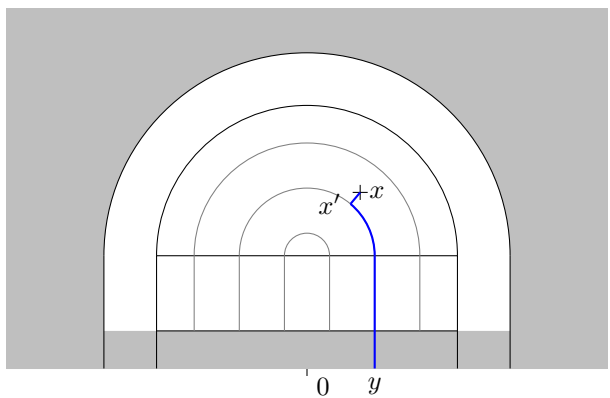
\begin{figure}
  \centering
  \begin{tikzpicture}
    \coordinate [label=below right:$0$] (o) at (0,0);
    \draw (0,0.1) -- (0, -0.1);
    \draw (-4,0) -- (4,0);

    \fill[lightgray] (-4,0) -- (-4,4.8) -- (4,4.8) -- (4,0) -- cycle;
    \fill[white] (2.7,0.5) -- (2.7,1.5)
    -- (2.7,1.5) arc [start angle=0, end angle=180, radius=2.7]
    -- (-2.7, 1.5) -- (-2.7,0.5) -- cycle;
    
    \draw (2,1.5) arc [start angle=0, end angle=180, radius=2];

    \draw (-2, 0) -- (-2,1.5) -- (2,1.5) -- (2,0);
    \draw (-2,0.5) -- (2,0.5);

    \draw (2.7,0) -- (2.7, 1.5);
    \draw (2.7,1.5) arc [start angle=0, end angle=180, radius=2.7];
    \draw (-2.7,1.5) -- (-2.7,0);

    \foreach \r in {0.3,0.9,...,1.8} {
      \draw[gray] (\r,1.5) arc [start angle=0, end angle=180, radius=\r];
      \draw[gray] (\r,0.5) -- (\r,1.5);
      \draw[gray] (-\r,0.5) -- (-\r,1.5);
    }

    \draw (50:1.1) ++(-0.1,1.5) -- node [right]{$x$} +(0.2,0);
    \draw (50:1.1) ++(0,1.4) -- +(0,0.2);
    \draw (50:0.9) ++(0,1.5) node[left]{$x'$};
    \draw[blue,thick] (50:0.9) ++(0,1.5) -- + (50:0.2);
    \draw[blue,thick] (50:0.9) ++(0,1.5) arc [start angle=50, end angle=0,
    radius=0.9];
    \draw (0.9,0) node[below]{$y$};
    \draw[blue,thick] (0.9,1.5) -- (0.9,0);

  \end{tikzpicture}
  \caption{The set $\{x \,|\, f(x) = 0\}$ is shown in grey. The path
    in blue from an arbitrary $x \in \Omega$ to a point on $\del \TM_f$
    has length going to 0 as $\epsilon \to 0$.}
  
  \label{fig:path}
\end{figure}

For the volume of $\TM_f$, taking $\rho = \epsilon$, the volume difference between
$\TM_f$ and $M$ goes to 0 as $\epsilon \to 0$. Indeed, using the fact that the chart $\psi$ is
a quasi-isometry, it is enough to show that the difference in volume between $S_f$ and
$\Omega \cup D$ goes to 0:
\begin{align*}
  |\Vol(S_f) - \Vol(\Omega \cup D)| &= \int_{\Omega \cup D}
  \sqrt{1 + |\nabla f|^2} dx_1 \dots dx_n  - \Vol(\Omega \cup D)\\
  &\leq (\sqrt{1 + \delta_2^2 \omega^2} - 1) \Vol(\Omega \cup D) \\
  &= \mc{O}(\epsilon^{n-1})
\end{align*}
which goes to 0 for $n \geq 2$. Finally, it is clear that the curvatures of
$\del \TM_f$ do not change as $M$ is kept fixed on some neighborhood
of the boundary (the set $C$).
\end{proof}

\bibliographystyle{plain}
\bibliography{biblio}

\end{document}